\newfont{\footsc}{cmcsc10 at 8truept}
\newfont{\footbf}{cmbx10 at 8truept}
\newfont{\footrm}{cmr10 at 10truept}
\newtheorem{theorem}{Theorem}
\newtheorem{conjecture}[theorem]{Conjecture}
\newtheorem{lemma}[theorem]{Lemma}
\newtheorem{proposition}[theorem]{Proposition}
\newenvironment{proof}[1][Proof]{\noindent{\textbf {#1}  }}  {\hfill$\blacksquare$\bigskip}
\def\blfootnote{\xdef\@thefnmark{}\@footnotetext}
\begin{document}

\title{Maxima of the $Q$-index: graphs without long paths}
\author{Vladimir Nikiforov\thanks{Department of Mathematical Sciences, University of
Memphis, Memphis TN 38152, USA; \textit{email: vnikiforv@memphis.edu}} \ and
Xiying Yuan\thanks{Corresponding author. Department of Mathematics, Shanghai
University, Shanghai 200444, China; \textit{email: xiyingyuan2007@hotmail.com
}}\thanks{Supported by NSF of China Grant No. 11101263, and by a Grant of
\textquotedblleft The First-class Discipline of Universities in
Shanghai\textquotedblright}}
\maketitle

\begin{abstract}
This paper gives tight upper bound on the largest eigenvalue $q\left(
G\right)  $ of the signless Laplacian of graphs with no paths of given order.
Thus, let $S_{n,k}$ be the join of a complete graph of order $k$ and an
independent set of order $n-k,$ and let $S_{n,k}^{+}$ be the graph obtained by
adding an edge to $S_{n,k}.$

The main result of the paper is the following theorem: $\medskip$

Let $k\geq1,$ $n\geq7k^{2},$ and let $G$ be a graph of order $n$.

\ \ \emph{(i)} if $q\left(  G\right)  \geq q\left(  S_{n,k}\right)  ,$ then
$P_{2k+2}\subset G,$ unless $G=S_{n,k};$

\ \ \emph{(ii)} if $q\left(  G\right)  \geq q\left(  S_{n,k}^{+}\right)  ,$
then $P_{2k+3}\subset G,$ unless $G=S_{n,k}^{+}.\medskip$

The main ingredient of our proof is a stability result of its own interest,
about graphs with large minimum degree and with no long paths. This result
extends previous work of Ali and Staton.\medskip

\textbf{Keywords: }\emph{signless Laplacian; spectral radius; forbidden paths;
stability theorem; extremal problem.}

\textbf{AMS classification: }05C50

\end{abstract}

\section{Introduction}

Given a graph $G,$ the $Q$-index of $G$ is the largest eigenvalue $q\left(
G\right)  $ of its signless Laplacian $Q\left(  G\right)  $. In this paper we
determine the maximum $Q$-index of graphs with no paths of given order. This
extremal problem is related to other similar problems, so we shall start by an
introductory discussion.

In the ground-breaking paper \cite{ErGa59}, Erd\H{o}s and Gallai established
many fundamental extremal relations about graphs with no path of given order,
for example:\emph{ if }$G$\emph{ is a graph of order }$n$\emph{ with no
}$P_{k+2},$\emph{ then }$e\left(  G\right)  \leq kn/2.$ The work of Erd\H{o}s
and Gallai caused a surge of later improvements and enhancements, not
subsiding to the present day; below we mention some of these results and make
a contribution of our own.

Let $S_{n,k}$ be the join of a complete graph of order $k$ and an independent
set of order $n-k;$ i.e., $S_{n,k}=K_{k}\vee\overline{K}_{n-k}.$ Also, let
$S_{n,k}^{+}$ be the graph obtained by adding an edge to $S_{n,k}.$ Write
$\mathcal{G}\left(  n\right)  $ for the family of all graphs of order $n,$ and
$P_{l}$ for the path of order $l.$

A nice and definite enhancement of the Erd\H{o}s-Gallai result has been
obtained by Balister, Gyori, Lehel and Schelp \cite{BGLS08}.

\begin{theorem}
\label{BES} Let $k\geq1,$ $n>\left(  5k+4\right)  /2,$ $G\in\mathcal{G}\left(
n\right)  ,$ and let $G$ be connected.

\emph{(i)} if $e\left(  G\right)  \geq e\left(  S_{n,k}\right)  ,$ then
$P_{2k+2}\subset G$, unless $G=S_{n,k};$

\emph{(ii)} if $e\left(  G\right)  \geq e\left(  S_{n,k}^{+}\right)  ,$ then
$P_{2k+3}\subset G,$ unless $G=S_{n,k}^{+}.$
\end{theorem}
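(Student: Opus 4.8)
The plan is to prove the equivalent extremal statement: among all connected graphs of order $n$ with no $P_{2k+2}$, the graph $S_{n,k}$ is the unique edge-maximizer, and analogously $S_{n,k}^{+}$ is the unique maximizer among connected graphs with no $P_{2k+3}$. Parts (i) and (ii) follow at once, since a connected $G$ with $e(G)\ge e(S_{n,k})$ that avoids $P_{2k+2}$ would contradict maximality unless $G=S_{n,k}$. First I would record the edge counts $e(S_{n,k})=\binom{k}{2}+k(n-k)$ and $e(S_{n,k}^{+})=\binom{k}{2}+k(n-k)+1$, and verify the two structural facts that make these the natural candidates: in $S_{n,k}$ the $n-k$ independent vertices must alternate with the $k$ clique vertices, so its longest path is $P_{2k+1}$, while the single extra edge of $S_{n,k}^{+}$ buys exactly one more vertex on a longest path, giving $P_{2k+2}$ but not $P_{2k+3}$.

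The engine of the argument is the rotation--extension technique applied to a longest path $P=v_{1}\cdots v_{p}$, where $p\le 2k+1$ in case (i). Since $P$ is longest, $N(v_{1})\cup N(v_{p})\subseteq V(P)$, and the standard observation that $v_{1}\sim v_{i+1}$ together with $v_{p}\sim v_{i}$ would produce a spanning cycle of $P$ (hence, by connectivity, a longer path whenever $p<n$) shows that the two endpoint-neighbourhoods occupy disjoint index sets; this yields $d(v_{1})+d(v_{p})\le p-1\le 2k$. Applying the same rotations to every endpoint reachable from $v_{1}$ produces a family of low-degree vertices, which is what one trades against the Erd\H{o}s--Gallai count $e(G)\le kn$ to recover the sharp constant $\binom{k}{2}+k(n-k)$. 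To make this precise I would pass to the block--cut decomposition and reduce to the $2$-connected case: a path meets each block in a single subpath, so bounding the circumference inside every block controls the global longest path, and a Kopylov-type weighting (charging each vertex by its degree and discharging the low-degree endpoints produced by the rotations) gives the edge bound for a $2$-connected core.

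A crucial point is that the hypothesis $n>(5k+4)/2$ is exactly what defeats the competing extremal family. For small $n$ the clique $K_{2k+1}$ (respectively $K_{2k+2}$) also avoids $P_{2k+2}$ (respectively $P_{2k+3}$) and is in fact \emph{denser} than $S_{n,k}$; the threshold guarantees that the ``few high-degree vertices plus a large independent set'' configuration overtakes the ``one dense clique'' configuration. Concretely, after the weighting one is left comparing two explicit quantities, one linear in $n$ and one roughly quadratic in $k$ but independent of $n$, and the lower bound on $n$ selects the linear (i.e.\ $S_{n,k}$-type) optimum.

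The main obstacle I anticipate is the uniqueness analysis rather than the bound itself. Establishing $e(G)\le e(S_{n,k})$ is comparatively routine once the rotation estimates and the clique comparison are in place; but forcing $G=S_{n,k}$ in the equality case requires tracking every inequality through the discharging and ruling out all alternative attachments of the off-path vertices. Part (ii) runs the same argument with the parameter shifted by one, except that the extra edge makes $S_{n,k}^{+}$ only \emph{almost} regular outside the clique, so the equality analysis must additionally locate that edge; this refined stability statement is precisely the ingredient the paper advertises as being of independent interest and as extending the work of Ali and Staton.
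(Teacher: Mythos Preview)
The paper does not prove this statement at all. Theorem~\ref{BES} is quoted as a known result of Balister, Gy\H{o}ri, Lehel and Schelp \cite{BGLS08}; it is stated for context and then used as a black box (as ``Lemma~\ref{BES}'') inside the proofs of Proposition~\ref{dom} and Lemma~\ref{lind}. So there is no ``paper's own proof'' to compare your proposal against.

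Your outline is a plausible high-level sketch of how the original BGLS argument proceeds (rotation--extension on a longest path, reduction via the block--cut tree, and a Kopylov-style comparison that the threshold $n>(5k+4)/2$ resolves in favour of the $S_{n,k}$ configuration over the clique configuration). As a plan it is reasonable, though it remains a plan: the discharging step and the uniqueness analysis are gestured at rather than carried out.

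One genuine confusion to flag: your final paragraph identifies the equality case of part~(ii) with ``the ingredient the paper advertises as being of independent interest and as extending the work of Ali and Staton.'' That is a misreading. The paper's new stability theorem (Theorem~\ref{AS+}) is a \emph{minimum-degree} statement --- it classifies connected graphs with $\delta(G)\ge k$ and no $P_{2k+3}$ --- not the edge-extremal equality case of Theorem~\ref{BES}. It is used in the proof of the $Q$-index theorem (Theorem~\ref{mt}), not in any proof of Theorem~\ref{BES}, and its content and proof technique are quite different from what you describe.
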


The main result of this paper is in the spirit of a recent trend in extremal
graph theory involving spectral parameters of graphs; most often this is the
largest eigenvalue $\mu\left(  G\right)  $ of the adjacency matrix of a graph
$G.$ The central question in this setup is the following one:\medskip

\textbf{Problem A }\emph{Given a graph }$F,$\emph{ what is the maximum }%
$\mu\left(  G\right)  $\emph{ of a graph }$G\in\mathcal{G}\left(  n\right)
$\emph{ with no subgraph isomorphic to }$F?\medskip$

Quite often, the results for $\mu\left(  G\right)  $ closely match the
corresponding edge extremal results. For illustration, compare Theorem
\ref{BES} with the following result, obtained in \cite{Nik10}.

\begin{theorem}
Let $k\geq1,$ $n\geq2^{4k+4}$ and $G\in G\left(  n\right)  .$

\emph{(i)} if $\mu\left(  G\right)  \geq\mu\left(  S_{n,k}\right)  ,$ then
$P_{2k+2}\subset G,$ unless $G=S_{n,k};$

\emph{(ii)} if $\mu\left(  G\right)  \geq\mu\left(  S_{n,k}^{+}\right)  ,$
then $P_{2k+3}\subset G,$ unless $G=S_{n,k}^{+}.$
\end{theorem}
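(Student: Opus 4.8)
I will prove part (i) in full and obtain (ii) by the same scheme; throughout write $\mu=\mu(G)$ and $\mu_0=\mu(S_{n,k})$. First I would record $\mu_0$ and reduce to the connected case. The partition of $V(S_{n,k})$ into the clique and the independent set is equitable with quotient matrix $\begin{pmatrix} k-1 & n-k \\ k & 0 \end{pmatrix}$, so $\mu_0$ is the larger root of $\lambda^2-(k-1)\lambda-k(n-k)$; hence $\mu_0^2=(k-1)\mu_0+k(n-k)>k(n-k)=kn-k^2$. Since $\mu(S_{n,k})$ is strictly increasing in $n$ and every subgraph of a $P_{2k+2}$-free graph is again $P_{2k+2}$-free, passing to the component carrying the spectral radius lets me assume $G$ connected (the sub-threshold case being trivial, as there $\mu\le n'-1<\mu_0$).

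Next comes the concentration step, which is the heart of the argument. Let $\mathbf{x}$ be the Perron eigenvector of the adjacency matrix $A(G)$, scaled so that $\max_i x_i=x_u=1$. Squaring the eigenvalue equation at $u$ gives $\mu^2=\mu^2x_u=\sum_{v\sim u}\sum_{w\sim v}x_w\le\sum_{v\sim u}d(v)$, so the neighbour-degree sum of $u$ exceeds $kn-k^2$. Because $G$ is $P_{2k+2}$-free, the Erd\H{o}s--Gallai bound gives $e(H)\le k\,|V(H)|$ for every subgraph $H$. The plan is to feed this local bound repeatedly into the eigenvector equation --- to neighbourhoods, to second neighbourhoods, and to the sets of vertices of large $x$-weight --- and thereby show that all but at most $k$ vertices carry small weight and small degree, and that there is a set $R$ with $|R|\le k$ such that each vertex of $R$ has degree $n-o(n)$ while every vertex outside $R$ sends almost all of its few edges into $R$.

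With this approximate structure in hand I would finish by pinning $G$ down exactly. The key combinatorial observation is that if $R$ is a set of $k$ vertices complete to the rest and $V\setminus R$ contains even a single edge $ab$, then alternating through $R$ yields the path $b,a,r_1,c_1,r_2,c_2,\dots,r_k,c_k$ on $2k+2$ vertices, a forbidden $P_{2k+2}$. Thus in the idealized configuration $V\setminus R$ is independent and $G$ is a subgraph of $K_{|R|}\vee\overline{K}_{n-|R|}$. Monotonicity of $\mu(S_{n,\cdot})$ forces $|R|=k$, since a smaller dominating clique would give $\mu\le\mu(S_{n,k-1})<\mu_0$; and then every edge of $S_{n,k}$ must be present, for deleting any one of them strictly lowers the spectral radius below $\mu_0$. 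To convert the approximate structure of the previous step into this exact conclusion cleanly, I would verify that the structure already yields $e(G)\ge e(S_{n,k})$ and then invoke Theorem \ref{BES}(i), whose edge-level stability gives $G=S_{n,k}$ outright.

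The main obstacle is the concentration step: turning the single global inequality $\mu^2\gtrsim kn$ into the statement that the weight and the large degrees live on at most $k$ vertices. Controlling the Perron weights of the many low-degree vertices while iterating the local Erd\H{o}s--Gallai bounds is where the estimates are lossy, and it is precisely this iteration that forces the large threshold $n\ge 2^{4k+4}$. Part (ii) runs through the same four steps with $2k+2$ replaced by $2k+3$: the extremal complement is now allowed one edge, giving $G\subseteq S_{n,k}^{+}$ with $e(S_{n,k}^{+})=e(S_{n,k})+1$, and Theorem \ref{BES}(ii) closes the argument.
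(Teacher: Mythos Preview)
The paper does not prove this theorem; it is quoted from \cite{Nik10} as background motivation for the paper's main result, Theorem~\ref{mt}, which is the analogous statement for the signless Laplacian $Q$-index. So there is no proof in this paper to compare your proposal against.

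On the proposal itself, two gaps stand out. First, the concentration step --- which you correctly identify as the heart of the argument --- is only announced, not executed: you assert that iterating the eigenvector equation against local Erd\H{o}s--Gallai bounds will isolate a set $R$ of at most $k$ near-dominating vertices, but none of those iterations is written down, and essentially the entire proof lives there. Second, your closing move, ``verify that the structure already yields $e(G)\ge e(S_{n,k})$ and then invoke Theorem~\ref{BES}(i)'', is not justified: the spectral hypothesis $\mu(G)\ge\mu(S_{n,k})$ does not by itself give $e(G)\ge e(S_{n,k})$, and an approximate structure with $k$ vertices of degree $n-o(n)$ rather than $n-1$ does not obviously make up the shortfall of $\binom{k}{2}$ edges.

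For contrast, the paper's proof of the $Q$-index analogue (Theorem~\ref{mt}) follows a different architecture that you might find instructive: it first forces a \emph{genuine} dominating vertex (Proposition~\ref{dom}) via the Merris and Das bounds, then peels off low-degree vertices to reach an induced subgraph $H$ with $\delta(H)\ge k$ (Lemma~\ref{lind}), and finally applies the structural stability Theorems~\ref{AS} and~\ref{AS+} to $H$. The endgame there does not go through the edge-extremal Theorem~\ref{BES} at all; the exact structure comes directly from the minimum-degree stability theorems, which sidesteps the issue you would face in converting ``$R$ is nearly dominating'' into an exact edge count.
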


In fact, our paper contributes to an even newer trend in extremal graph
theory, a variation of Problem A for the $Q$-index of graphs, where the
central question is the following one:\medskip

\textbf{Problem B }\emph{Given a graph }$F,$\emph{ what is the maximum }%
$Q$\emph{-index a graph }$G\in\mathcal{G}\left(  n\right)  $\emph{ with no
subgraph isomorphic to }$F?\medskip$

This question has been resolved for various subgraphs, among which are the
matchings. Thus, write $M_{k}$ for a matching of $k$ edges. In \cite{Yu08} Yu
proved the following definite result about $M_{k}.$

\begin{theorem}
\label{tYu}Let $k\geq1$ and $G\in\mathcal{G}\left(  n\right)  .$

\emph{(i) }if $2k+2\leq n<\left(  5k+3\right)  /2$ and $q\left(  G\right)
\geq4k,$ then $M_{k+1}\subset G,$ unless $G=K_{2k+1}\cup$ $\overline
{K}_{n-2k-1};$

\emph{(ii) }if $n=\left(  5k+3\right)  /2$ and $q\left(  G\right)  \geq4k,$
then $M_{k+1}\subset G,$ unless $G=K_{2k+1}\cup$ $\overline{K}_{n-2k-1}$ or
$G=S_{n,k};$

\emph{(iii) }if $n>\left(  5k+3\right)  /2$ and $q\left(  G\right)  \geq
q\left(  S_{n,k}\right)  ,$ then $M_{k+1}\subset G,$ unless $G=S_{n,k}.$
\end{theorem}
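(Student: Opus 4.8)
Write $\nu(G)$ for the matching number, so that ``$M_{k+1}\not\subset G$'' means $\nu(G)\le k$, and recall that $q(\cdot)$ does not decrease under adding an edge or a vertex. I would begin by describing the two candidate graphs. Plainly $q(K_{2k+1}\cup\overline K_{n-2k-1})=q(K_{2k+1})=4k$; and the equitable partition of $S_{n,k}$ into its $k$-clique and its independent set has quotient matrix $\left(\begin{smallmatrix} n+k-2 & n-k\\ k & k\end{smallmatrix}\right)$, so $q(S_{n,k})$ is the larger root of $\lambda^{2}-(n+2k-2)\lambda+2k(k-1)=0$. This quadratic equals $2k(5k-2n+3)$ at $\lambda=4k$, so $q(S_{n,k})\ge 4k$ iff $n\ge(5k+3)/2$, with equality iff $n=(5k+3)/2$; this is where the three regimes and the second exceptional graph of part (ii) come from. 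It therefore suffices to show that $\max\{q(G):|V(G)|=n,\ \nu(G)\le k\}=\max\{4k,q(S_{n,k})\}$, with the stated maximisers.

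For the upper bound, let $\nu(G)\le k$ and invoke the Gallai--Edmonds decomposition $V(G)=D\cup A\cup C$: the components $D_{1},\dots,D_{c}$ of $G[D]$ are factor-critical, $G[C]$ has a perfect matching, $A=N(D)\setminus D$, there are no edges between distinct $D_{j}$ or between $D$ and $C$, and $\nu(G)=\tfrac12(n-c+|A|)$. The crucial points are that $\nu(G)$ depends only on $n$, $c$, $|A|$, and that in $G$ every maximum matching covers $A\cup C$ and misses at most one vertex of each $D_{j}$. From this one checks that the following operations leave the triple $(n,c,|A|)$ unchanged and do not decrease $q(G)$: completing $G[A\cup C]$ to a clique, joining $A$ completely to $D$, and completing each $G[D_{j}]$ to a clique (three edge additions, whose admissibility follows from the two points above); absorbing $C$ into some $D_{j}$ by making $D_{j}\cup C$ a clique (legitimate because $|C|$ is even and $|D_{j}|$ odd, so the union is again a factor-critical component); and replacing two nontrivial components $K_{2\alpha+1}\cup K_{2\beta+1}$ of $G[D]$ by $K_{2\alpha+2\beta+1}\cup K_{1}$. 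The last operation is an edge-switch rather than an edge-addition, and that it does not decrease $q$ is the one place a direct Perron-eigenvector estimate is needed. After these moves $G$ has been replaced by a graph of at-least-as-large $Q$-index lying in the one-parameter family
\[
 G_{a}:=K_{a}\vee\bigl(K_{2(k-a)+1}\cup\overline K_{n+a-2k-1}\bigr),\qquad 0\le a\le k,
\]
with $G_{0}=K_{2k+1}\cup\overline K_{n-2k-1}$ and $G_{k}=S_{n,k}$.

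Finally, for $1\le a\le k$ the graph $G_{a}$ carries the equitable partition into its three natural vertex classes, with quotient matrix
\[
 B_{a}=\begin{pmatrix} n+a-2 & 2(k-a)+1 & n+a-2k-1\\ a & 4k-3a & 0\\ a & 0 & a\end{pmatrix},
\]
so $q(G_{a})$ is the Perron root of $B_{a}$, while $q(G_{0})=4k$ since $G_{0}$'s only nontrivial component is $K_{2k+1}$. A careful but routine analysis of the polynomials $\det(\lambda I-B_{a})$, together with the quadratic of the first paragraph for the endpoint comparison, then gives $\max_{0\le a\le k}q(G_{a})=\max\{4k,q(S_{n,k})\}$ over the whole range $k\ge1$, $n\ge 2k+2$, attained uniquely at $a=0$ when $n<(5k+3)/2$, uniquely at $a=k$ when $n>(5k+3)/2$, and at both endpoints when $n=(5k+3)/2$; tracking equality back through the reduction shows that no other graph attains the bound, which is the theorem. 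I expect the main obstacle to be the structural reduction: verifying that each listed move preserves $\nu(G)\le k$ uses the fine Gallai--Edmonds matching description and is sensitive to the parity of factor-critical components (which amalgamations are admissible), and the edge-switch step needs a genuine eigenvector comparison rather than a monotonicity argument; a lesser difficulty is showing the moves are \emph{strictly} $q$-increasing off the family, so as to pin down the equality cases, and carrying out the $B_{a}$-optimisation with the exact thresholds.
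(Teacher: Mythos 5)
This statement is Yu's theorem, which the paper imports from \cite{Yu08} without proof, so there is no in-paper argument to compare against: the authors use only clause \emph{(iii)}, and their own Theorem \ref{mt} supersedes it by an entirely different route (the dominating-vertex reduction of Proposition \ref{dom} followed by the path-stability Theorems \ref{AS} and \ref{AS+}, with no matching theory at all). Judged on its own, your plan is the natural one and I believe it can be completed. The Gallai--Edmonds (equivalently Berge--Tutte) reduction of a graph with $\nu(G)\le k$ to the one-parameter family $G_a=K_a\vee\bigl(K_{2(k-a)+1}\cup\overline K_{n+a-2k-1}\bigr)$ is exactly the classical route to the Erd\H{o}s--Gallai edge bound for matchings; your quotient matrices for $S_{n,k}$ and $G_a$ check out, and the evaluation $2k(5k-2n+3)$ of the quadratic at $\lambda=4k$ correctly locates the three regimes.

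Two caveats. First, the admissibility of the clique-completion moves is cleaner via the deficiency form of the Berge--Tutte formula --- adding edges inside $A\cup C$, between $A$ and $D$, and inside each $D_j$ changes neither $o(G-A)$ nor $|A|$, so $\nu\le k$ survives --- than via the ``maximum matchings cover $A\cup C$'' description you invoke, which does not by itself license edge additions; and for the component-merging switch you should absorb into whichever clique carries the larger Perron entry, after which the Rayleigh-quotient comparison of added versus deleted terms $(x_i+x_j)^2$ is immediate. Second, the two steps you defer are the actual substance of the theorem: the proof that $\max_{0\le a\le k}q(G_a)$ is attained only at the endpoints for every $n\ge 2k+2$ (a genuine two-parameter analysis of the cubics $\det(\lambda I-B_a)$), and the uniqueness analysis, which is delicate in regime \emph{(i)} because the extremal graph is disconnected and adding an edge inside a small component need not increase $q$, so equality cannot be traced back by monotonicity alone. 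As written this is a credible programme with correct scaffolding rather than a complete proof.
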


We are mostly interested in clause \emph{(iii) }of this theorem. As it turns
out, the focus on a subgraph as simple as $M_{k}$ conceals a much stronger
conclusion that can be drawn from the same premises. We arrive thus at the
main result of the present paper.

\begin{theorem}
\label{mt}Let $k\geq1,$ $n\geq7k^{2},$ and $G\in\mathcal{G}\left(  n\right)
.$

\emph{(i)} if $q\left(  G\right)  \geq q\left(  S_{n,k}\right)  ,$ then
$P_{2k+2}\subset G,$ unless $G=S_{n,k};$

\emph{(ii)} if $q\left(  G\right)  \geq q\left(  S_{n,k}^{+}\right)  ,$ then
$P_{2k+3}\subset G,$ unless $G=S_{n,k}^{+}.$
\end{theorem}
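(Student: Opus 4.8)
The plan is to reduce the spectral statement to the edge-count statement of Theorem~\ref{BES}, exactly in the spirit of the adjacency-matrix version in \cite{Nik10}, but with an extra twist needed because $q(G)$ is more sensitive to degrees than to edges. First I would record the baseline estimates $q(S_{n,k})\geq n+2k-2$ and $q(S_{n,k}^{+})\geq n+2k-2+c/n$ for a suitable constant, obtained from the quadratic form on the indicator vector of the clique (or from the known equitable-partition quotient matrix for these graphs). Conversely, if $G$ has order $n$ and $q(G)\geq q(S_{n,k})$, the standard bound $q(G)\leq \max_{uv\in E}\bigl(d(u)+d(v)\bigr)$, together with $q(G)\ge \Delta(G)+1$ and $q(G)\ge 2e(G)/n$ (Rayleigh quotient on the all-ones vector), forces $G$ to have both large maximum degree and many edges — roughly $e(G)\geq e(S_{n,k})$ after we discard negligible error terms, using $n\geq 7k^2$.

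The main obstacle, and the reason the hypothesis is $n\ge 7k^2$ rather than merely linear in $k$, is that $q(G)\geq q(S_{n,k})$ does not by itself immediately give $e(G)\geq e(S_{n,k})$: a graph could trade edges for a high-degree vertex. So the heart of the argument is a structural/stability step. Here I would invoke the promised stability result about graphs with large minimum degree and no long path (the extension of Ali--Staton advertised in the abstract): after deleting a bounded number of low-degree vertices, $G$ either contains the desired path $P_{2k+2}$ (resp.\ $P_{2k+3}$) — and we are done — or else the reduced graph looks essentially like $K_k\vee(\text{something sparse})$, i.e.\ it has a small set of $\le k$ "dominating" vertices whose removal leaves a graph with no path on too many vertices, hence (by Erd\H{o}s--Gallai applied componentwise) very few edges. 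Quantitatively this pins down $e(G)$ to within $O(k^2)$ of $e(S_{n,k})$ and, more importantly, pins down the structure.

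Given that structural dichotomy, I would close the argument as follows. In the "path-free" branch, the structure theorem says $G$ is a subgraph of a graph of the form $K_r\vee H$ with $r\le k$ and $H$ having no long path; then a direct eigenvalue comparison — monotonicity of $q$ under adding edges, plus the explicit computation of $q$ on the candidate near-extremal graphs — shows $q(G)\le q(S_{n,k})$ with equality only for $G=S_{n,k}$ (resp.\ the $S^+$ statement), because among all graphs on $n$ vertices that are $P_{2k+2}$-free and have a $\le k$-element dominating set, $S_{n,k}$ uniquely maximizes the $Q$-index. For part (ii) the same scheme runs with $2k+2$ replaced by $2k+3$ and $S_{n,k}$ by $S_{n,k}^{+}$; the only additional care is the $\Theta(1/n)$ gap between $q(S_{n,k}^{+})$ and $q(S_{n,k})$, which must be shown to dominate all the $O(k^2/n^2)$-type error terms accumulated along the way — this is where $n\ge 7k^2$ is spent a second time. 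The bookkeeping in this last comparison, rather than any single conceptual point, is what I expect to be the most laborious part of the write-up.
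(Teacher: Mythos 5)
Your overall architecture does match the paper's: spectral hypotheses force a degree/edge structure, low-degree vertices are deleted to reach a minimum-degree-$k$ subgraph, the Ali--Staton-type stability theorem is applied, and the near-extremal configurations are eliminated by direct $Q$-index comparison. However, two of your reductions contain genuine gaps. First, the inequality $q(G)\geq 4e(G)/n$ from the Rayleigh quotient on the all-ones vector points the wrong way: a lower bound on $q(G)$ together with a lower bound of $q$ in terms of $e$ cannot force $G$ to have many edges. To extract $e(G)>k(n-k)$ from $q(G)\geq q(S_{n,k})$ you need an \emph{upper} bound of $q$ in terms of $e$, which is exactly Das's inequality $q(G)\leq 2e(G)/(n-1)+n-2$; this is how the paper obtains inequality (\ref{Dase}). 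Second, "large maximum degree" is not enough. The bounds you cite give at best $\Delta(G)\geq q(G)/2\approx n/2$, whereas the entire endgame (concatenating two half-paths through a single vertex to build $P_{2k+2}$ or $P_{2k+3}$) requires a genuinely dominating vertex, $\Delta(G)=n-1$. Establishing this is the content of Proposition \ref{dom}, which needs the Merris bound $q(G)\leq\max_u\{d(u)+\frac{1}{d(u)}\sum_{v\in\Gamma(u)}d(v)\}$, the Erd\H{o}s--Gallai edge count for $P_{2k+3}$-free graphs, and a convexity argument in $d(u)$; your sketch treats it as an immediate consequence of standard bounds, which it is not.

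A further underestimate is your closing claim that $S_{n,k}$ "uniquely maximizes the $Q$-index" among $P_{2k+2}$-free graphs with a small dominating set, dismissed as bookkeeping. In part (ii) the residual configuration where $V(H')\cup I$ induces several disjoint edges is not killed by any monotonicity or quotient-matrix computation; the paper needs an eigenvector-based edge-relocation argument (comparing $\sum_{ij\in E}(x_i+x_j)^2$ before and after moving pendant edges onto a high-eigenvector-weight vertex of the clique) to show $q(G)<q(S_{n,k})$ there. That step, together with ruling out the exceptional graphs $L_{t,k}$ and $K_1\vee(tK_k\cup K_{k+1})$ from Theorems \ref{AS} and \ref{AS+} by explicit $Q$-index estimates, is where the real work of the comparison lies, and none of it is routine error-term accounting.
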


Our proof of Theorem \ref{mt} is quite complicated and builds upon several
results, among which is a stability theorem enhancing previous results by
Erd\H{o}s and Gallai and Ali and Staton. We begin with a corollary of Theorems
1.9 and 1.12 of Erd\H{o}s and Gallai \cite{ErGa59}.

\begin{theorem}
\label{tEG} Let $k\geq2,$ $G$ be a $2$-connected graph, and $u$ be a vertex of
$G.$ If $d\left(  w\right)  \geq k$ for all vertices $w\neq u,$ then $G$ has a
path of order $\min\left\{  \nu(G),2k\right\}  ,$ with end vertex $u.$
\end{theorem}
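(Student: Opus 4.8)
The plan is to work with a longest path of $G$ having $u$ as one endpoint. Write such a path as $P=v_0v_1\cdots v_\ell$ with $v_0=u$, put $p=\ell+1$ for its order, and suppose for contradiction that $p<\min\{\nu(G),2k\}$, so $\ell\le 2k-2$ and $p\le\nu(G)-1$. Since $P$ cannot be prolonged past $v_\ell$, all neighbours of $v_\ell$ lie on $P$; as $v_\ell\ne u$ this gives $k\le d(v_\ell)\le\ell$, hence $p\ge k+1$. The real work is to raise $p$ to $2k$ (or to $\nu(G)$), and the awkward point is that the hypothesis controls $v_\ell$ but gives only $d(u)\ge 2$, from $2$-connectedness, at the other end.

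To exploit $v_\ell$ I would run P\'osa-type rotations at the far end while keeping $u$ fixed: from $P$ and a chord $v_iv_\ell$ one gets the equally long $u$-path $v_0\cdots v_iv_\ell v_{\ell-1}\cdots v_{i+1}$, and iterating yields a set $W$ of vertices each of which is the non-$u$ endpoint of a longest $u$-path on the vertex set $V(P)$. Every $w\in W$ has all neighbours on $P$, so $d(w)\ge k$ makes $V(P)$ dense around $W$; and no $w\in W$ should be adjacent to $u$, for otherwise that $u$-path plus the edge $wu$ is a cycle $C$ with $V(C)=V(P)$, and since $C$ omits a vertex of $G$ one would reattach an outside vertex and obtain a longer $u$-path. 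The P\'osa estimate — successors along the relevant paths of neighbours of $W$ again lie in $W$ — then bounds the number of vertices of $V(P)\setminus W$ adjacent to $W$; against $|V(P)|\le 2k-1$ and $d(w)\ge k$ for all $w\in W$ this should close the argument. (Alternatively one can adjoin to $G$ a clique on $k$ new vertices, each joined to $u$ and to a fixed neighbour $v$ of $u$; the new graph is $2$-connected with minimum degree $\ge k$, so the classical Erd\H{o}s--Gallai circumference bound applies, and since the new clique hangs on $G$ only at $u$ and $v$ a long cycle meets it in at most one segment, which can be rerouted to recover a $u$-path of the desired order in $G$.)

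The step I expect to be the genuine obstacle is precisely where the ``ending at $u$'' requirement collides with the weak hypothesis at $u$: converting a spanning cycle of $P$ (or a long cycle produced by rotations or by the augmentation) into a \emph{strictly longer path that still ends at $u$} forces one to use $2$-connectedness as a fan from the relevant vertex to the cycle, and verifying that such a fan really lengthens the $u$-rooted path — not merely some path — is where the bookkeeping is delicate. Once the structure of $W$ is pinned down, the final degree count is routine.
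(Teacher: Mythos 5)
You should know at the outset that the paper does not prove this statement: it is imported verbatim as a corollary of Theorems 1.9 and 1.12 of Erd\H{o}s and Gallai \cite{ErGa59}, so there is no internal argument to compare yours against. Judged on its own terms, what you have written is a plan rather than a proof, and the step you yourself flag as ``the genuine obstacle'' is a real missing idea, not deferred bookkeeping. If some $w\in W$ were adjacent to $u$ you correctly obtain a cycle $C$ with $V(C)=V(P)$, and since $p<\nu(G)$ some vertex $x$ lies outside $C$; but attaching $x$ to $C$ at a single vertex $c_i$ yields a longer path whose endpoints are $x$ and a cycle-neighbour of $c_i$, which is $u$ only if $x$ happens to attach next to $u$. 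Even using $2$-connectedness to get two disjoint paths from $x$ to $C$, meeting it at $c_a$ and $c_b$, does not obviously finish: the points $c_a,c_b$ split $C$ into two arcs, and every way of threading $x$ through the resulting theta-graph so that the walk terminates at $u$ either omits one whole arc (so the new path need not be longer) or leaves $u$ in the interior. This is exactly where the content of the theorem lives, and the sketch does not resolve it.

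The two closings you defer are also not secure. The P\'osa count does not visibly close: from $N(w)\subseteq V(P)$ and $d(w)\ge k$ for all $w\in W$, together with the standard rotation bound $|N(W)|<2|W|$ and $W\cup N(W)\subseteq V(P)\setminus\{u\}$ with $|V(P)|\le 2k-1$, one only gets $|W|>k/2$ and $|W\cup N(W)|\le 2k-2$, which is consistent; some further structural input is needed before ``the final degree count is routine.'' The parenthetical augmentation route has a quantitative flaw as stated: hanging a $k$-clique on $u$ and a neighbour $v$ raises the order by $k$, so the Erd\H{o}s--Gallai circumference bound on the augmented graph gives a cycle of length at least $2k$, but the segment inside the clique may account for up to $k$ of that length, and deleting it leaves only a $u$--$v$ path of order roughly $k+1$ in $G$. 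To make that route work you would need an attached gadget that a long cycle can meet in only a bounded number of vertices, or you should instead invoke the Erd\H{o}s--Gallai results on paths with prescribed endpoints directly --- which is, in effect, what the paper does by citation.
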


To state the next result set $L_{t,k}:=K_{1}\vee tK_{k},$ i.e., $L_{t,k}$
consists of $t$ complete graphs of order $k+1,$ all sharing a single common
vertex; call the common vertex the \emph{center }of $L_{t,k}$. In
\cite{AlSt96}, Ali and Staton gave the following stability theorem.

\begin{theorem}
\label{AS}Let $k\geq1,$ $n\geq2k+1,$ $G\in\mathcal{G}\left(  n\right)  ,$ and
$\delta\left(  G\right)  \geq k.$ If $G$ is connected, then $P_{2k+2}\subset
G,$ unless $G\subset S_{n,k},$ or $n=tk+1$ and $G=L_{t,k}$.
\end{theorem}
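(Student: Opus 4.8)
\noindent The plan is to argue by induction on $n$ (or on the number of edges), reducing a connected graph $G$ with $\delta(G)\ge k$ and no $P_{2k+2}$ to one of the two exceptional families. The natural first move is to pass to a $2$-connected "core": if $G$ has a cut vertex, decompose $G$ along it into blocks (or into two subgraphs overlapping in the cut vertex), observe that each piece still has minimum degree $\ge k-1$ everywhere and minimum degree $\ge k$ away from the cut vertex, and splice together longest paths through the cut vertex. Using Theorem~\ref{tEG} applied to each block with the cut vertex playing the role of $u$, each block of order $\ge 2k+1$ would yield a path of order $2k$ ending at the cut vertex; concatenating two such paths from two different blocks produces a $P_{2k+2}$ (or longer), a contradiction. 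Hence every block has order at most $2k$, and then a short analysis of how blocks with $\delta\ge k$ can share cut vertices forces the structure $L_{t,k}$ (each block is $K_{k+1}$, all glued at one vertex) — this is exactly the branch "$n=tk+1$ and $G=L_{t,k}$."

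\noindent So the main case is $G$ $2$-connected. Here I would first invoke Theorem~\ref{tEG}: for any vertex $u$, since all other vertices have degree $\ge k$, $G$ contains a path of order $\min\{\nu(G),2k\}$ with endpoint $u$. If $\nu(G)\ge 2k+1$ we already get $P_{2k+1}$ through any prescribed endpoint, and a bit more work (extending at the free end, or applying the bound at both ends via a Dirac/Erd\H{o}s--Gallai-type rotation argument) should push this to $P_{2k+2}$ unless the graph is very constrained. Thus we may assume $\nu(G)\le 2k$, i.e. $G$ has a longest path $P$ of order exactly $2k$ (if the longest path were shorter, minimum degree $k$ together with $2$-connectivity already contradicts $\delta\ge k$ for $n\ge 2k+1$, so $P$ has order $2k$). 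Let $P=v_1v_2\cdots v_{2k}$. Every vertex outside $P$ has all $\ge k$ of its neighbors on $P$ (else we extend or find a longer path), and the endpoints $v_1,v_{2k}$ have all neighbors on $P$ as well. The standard Erd\H{o}s--Gallai rotation/crossing argument on $N(v_1)$ and $N(v_{2k})$ then forces $G[V(P)]$ to look like a book/split graph: one shows that the "core" indices hit by these neighborhoods form a clique of size $k$ dominating everything, which is precisely the $S_{n,k}$ branch ($K_k\vee\overline{K}_{n-k}$, so $G\subset S_{n,k}$).

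\noindent The main obstacle, I expect, is the careful case analysis inside the $2$-connected longest-path argument: ruling out intermediate configurations where the neighborhoods of the two path-endpoints are not "aligned" into a common clique of size $k$, and simultaneously handling vertices off the path. One has to combine (a) the no-$P_{2k+2}$ condition, which says neighbors of $v_1$ and $v_{2k}$ cannot be consecutive-or-too-close along $P$ in the usual rotation sense, with (b) the $\delta\ge k$ condition, which forces these neighborhoods to be large — and show the only way both can hold is the split-graph structure. A secondary technical point is correctly tracking the degree bookkeeping through the block decomposition so that "$\delta(G)\ge k$" is inherited well enough by each block to apply Theorem~\ref{tEG}, and checking the boundary orders ($n=2k+1$, and the transition $n=tk+1$) where $S_{n,k}$ and $L_{t,k}$ can coincide or degenerate. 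Once the $2$-connected case is pinned down, the reduction via cut vertices is comparatively routine.
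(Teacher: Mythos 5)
First, a point of order: the paper does not prove Theorem \ref{AS} at all — it is quoted from Ali and Staton \cite{AlSt96}, and only the companion Theorem \ref{AS+} is proved here, by an argument built on a longest \emph{cycle} (via Dirac's theorem) rather than a longest path. So the comparison below is against that companion proof and against what a complete proof would require.

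Your reduction to the $2$-connected case is essentially sound and matches the paper's handling of the non-$2$-connected case of Theorem \ref{AS+}: end-blocks have order at least $k+1$, Theorem \ref{tEG} gives a path of order $\min\{\nu(B),2k\}$ ending at the cut vertex, and splicing two such paths forces every block to be a $K_{k+1}$ hanging off a single common cut vertex, i.e.\ $L_{t,k}$. The genuine gap is in the $2$-connected case, which is where the entire content of the theorem lives. Three concrete problems. (1) You misquote Theorem \ref{tEG}: since $n\ge 2k+1$, it yields a path of order $\min\{\nu(G),2k\}=2k$, never $2k+1$; and ``we may assume $\nu(G)\le 2k$'' contradicts the hypothesis $n\ge 2k+1$ outright — you are conflating $\nu(G)$ with the order of a longest path. (2) The case you then analyse in detail, a longest path of order exactly $2k$, cannot occur: if $P=v_1\cdots v_{2k}$ is longest, then $N(v_1)$ and $N(v_{2k})$ lie on $P$, the index sets $\{i: v_{i+1}\in N(v_1)\}$ and $\{i: v_i\in N(v_{2k})\}$ each have size at least $k$ inside $\{1,\dots,2k-1\}$ and hence intersect, producing a cycle on $V(P)$, which connectivity and $n>2k$ extend to a $P_{2k+1}$. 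Moreover $S_{n,k}$ itself has longest path of order $2k+1$, so the case you work out excludes the extremal example. (3) The case that actually matters — longest path of order $2k+1$ — is exactly the one you dispose of with ``a bit more work should push this to $P_{2k+2}$ unless the graph is very constrained''; identifying that constrained structure as $G\subset S_{n,k}$ \emph{is} the theorem, and nothing in the proposal carries it out; in particular you never show that the vertices off the path form an independent set (your claim that every vertex outside $P$ has all its neighbours on $P$ does not follow just from maximality of $P$), and that independence argument is a substantial part of the published proof. The cleaner route, used in the paper's proof of Theorem \ref{AS+}, is to take a longest cycle $C$: Dirac forces $|C|\ge 2k$, absence of the long path forces $|C|=2k$, and the condition that no outside vertex sees two consecutive vertices of $C$ pins all outside neighbourhoods to a single alternation class of $C$, from which $G\subset S_{n,k}$ follows.
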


In the light of Theorem \ref{BES}, the theorem of Ali and Staton suggests a
possible continuation for $P_{2k+3},$ which however is somewhat more
complicated to state and prove.

\begin{theorem}
\label{AS+}Let $k\geq2,$ $n\geq2k+3,$ $G\in\mathcal{G}\left(  n\right)  $ and
$\delta\left(  G\right)  \geq k.$ If $G$ is connected, then $P_{2k+3}\subset
G,$ unless one of the following holds:

\emph{(i)} $G\subset S_{n,k}^{+}$;

\emph{(ii)} $n=tk+1$ and $G=L_{t,k};$

\emph{(iii)} $n=tk+2$ and $G\subset K_{1}\vee(\left(  t-1\right)  K_{k}\cup
K_{k+1});$

\emph{(iv) }$n=\left(  s+t\right)  k+2$ and $G\ $is obtained by joining the
centers of two disjoint graphs $L_{s,k}$ and $L_{t,k}$.
\end{theorem}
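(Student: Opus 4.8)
I would first isolate the only case that needs a genuinely new argument. If $G$ contains no $P_{2k+2}$, then Theorem \ref{AS} applies unchanged and gives either $G\subset S_{n,k}\subset S_{n,k}^{+}$ (case (i)), or $n=tk+1$ and $G=L_{t,k}$ (case (ii)). So from here on assume $P_{2k+2}\subset G$; since $G$ has no $P_{2k+3}$, every copy of $P_{2k+2}$ in $G$ is a longest path, and I fix one, $P=v_{0}v_{1}\cdots v_{2k+1}$.

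The next step is a rotation--extension argument bounding the degrees of the two ends of $P$. Maximality of $P$ gives $N(v_{0})\cup N(v_{2k+1})\subseteq V(P)$, and whenever $v_{0}v_{i+1}\in E$ and $v_{2k+1}v_{i}\in E$ the sequence $v_{0}v_{1}\cdots v_{i}v_{2k+1}v_{2k}\cdots v_{i+1}v_{0}$ is a cycle spanning $V(P)$; maximality of $P$ then forbids any vertex of that cycle from having a neighbour off $V(P)$, so connectedness of $G$ would force $V(G)=V(P)$ and $n=2k+2$, contradicting $n\geq 2k+3$. Hence, writing $S^{+}=\{\,i:v_{0}v_{i+1}\in E\,\}$ and $T=\{\,i:v_{2k+1}v_{i}\in E\,\}$, we have $S^{+}\cap T=\varnothing$ with $S^{+},T\subseteq\{0,1,\dots,2k\}$, so $d(v_{0})+d(v_{2k+1})=|S^{+}|+|T|\leq 2k+1$; together with $\delta(G)\geq k$ this forces $d(v_{0}),d(v_{2k+1})\in\{k,k+1\}$ with at most one of them equal to $k+1$, and the same applies to the ends of any longest path.

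I would then split on the connectivity of $G$. If $G$ is $2$-connected, the target is case (i): iterating rotations at the two ends of $P$ and invoking the standard P\'osa-type interlacing produces a set $R$ of endpoints of longest paths together with a fixed $k$-set $W$ such that every neighbour of a vertex of $R$ lies in $R\cup W$; one then shows, using $2$-connectivity and the tightness of $d(v_{0})+d(v_{2k+1})\leq 2k+1$, that $W$ is a dominating clique, that every vertex outside $W$ has all of its neighbours in $W$, and that $V(G)\setminus W$ spans at most one edge, i.e.\ $G\subset S_{n,k}^{+}$. If instead $G$ has a cut vertex, I would work in the block--cut tree. Since $\delta(G)\geq k\geq 2$, no end block is a single edge, so each end block $B$ is $2$-connected with $|V(B)|\geq k+1$ and, by Theorem \ref{tEG} applied inside $B$ with $u$ its cut vertex, $B$ contains a path of order at least $\min\{|V(B)|,2k\}\geq k+1$ ending at that cut vertex. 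Concatenating the paths coming from two end blocks along the path joining them in the block--cut tree yields a path of $G$; requiring it to have order at most $2k+2$ forces any two end blocks either to share their cut vertex with orders summing to at most $2k+3$, or to have distinct cut vertices joined by a bridge and both of order exactly $k+1$. Pushing this further---an end block of order $k+1$ must be $K_{k+1}$, at most one end block has order $k+2$ and none has larger order, any two cut vertices belonging to end blocks are joined by a bridge so there are at most two of them, and the only possible internal block is a single bridge---leaves exactly two shapes: a star of cliques at one cut vertex, which (since we are in the case $P_{2k+2}\subset G$) must contain a block of order $k+2$, giving case (iii); and two stars of cliques joined at their centres by a bridge, giving case (iv). The degenerate residual graphs are contained in $S_{n,k}^{+}$ and fall under case (i).

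The heart of the argument, and the main obstacle, is this last step in each branch. In the $2$-connected branch the difficulty is to upgrade the soft statement ``neighbourhoods of rotation endpoints lie in $R\cup W$'' to the rigid conclusion $G\subset S_{n,k}^{+}$: locating the clique canonically and checking that nothing at all escapes it apart from the single extra edge, which is where the rotation bookkeeping is heaviest and where $d(v_{0})+d(v_{2k+1})\leq 2k+1$ must be used at full strength. In the cut-vertex branch the labour is the exhaustive elimination of intermediate block--cut tree configurations together with the verification that all end blocks are (subgraphs of) cliques of the admissible orders; the small cases $k=2$ and end blocks of order close to $2k$, where $\min\{|V(B)|,2k\}$ may undercount the longest path from $u$ that actually exists in $B$, need separate checking so as not to produce spurious exceptions.
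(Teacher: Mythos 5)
Your reduction via Theorem \ref{AS} (if $P_{2k+2}\nsubseteq G$ you land immediately in cases (i) or (ii)) is a nice observation the paper does not use, and your degree bound $d(v_{0})+d(v_{2k+1})\leq 2k+1$ for the ends of a longest $P_{2k+2}$ is correctly derived. Your treatment of the non-$2$-connected branch is essentially the paper's argument (end blocks, Theorem \ref{tEG} applied at the cut vertex, concatenation along the block--cut tree) and is at least as detailed as the original. The divergence, and the problem, is the $2$-connected branch.

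There you propose P\'osa rotation--extension, but the step that actually proves anything --- ``rotations produce a set $R$ of endpoints together with a fixed $k$-set $W$ such that every neighbour of a vertex of $R$ lies in $R\cup W$, and then $W$ dominates, every vertex outside $W$ sends all its edges into $W$, and $V(G)\setminus W$ spans at most one edge'' --- is asserted, not derived. Standard rotation gives you $N(R)\subseteq V(P)$ and the P\'osa bound $|N(R)|<2|R|$; it does not hand you a canonical $k$-set $W$, and extracting one from the interlacing bookkeeping is precisely the content of the theorem in this branch. You flag this yourself as ``the main obstacle,'' which is an accurate self-diagnosis: as written the $2$-connected case is a plan, not a proof. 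For comparison, the paper avoids rotations entirely here: since $G$ is $2$-connected with $\delta(G)\geq k$, Dirac's theorem gives a longest cycle $C$ of length $l\geq 2k$, while $P_{2k+3}\nsubseteq G$ forces $l\leq 2k+1$; one then analyses directly how $V(G)\setminus V(C)$ attaches to $C$ (no vertex outside $C$ meets two consecutive cycle vertices, all outside vertices attach to the same alternating $k$-set, and the complementary set on $C$ is independent), which pins down $G\subset S_{n,k}^{+}$ with only short concrete checks. If you want to salvage your route, the cleanest fix is to switch from the longest path to the longest cycle at the start of the $2$-connected branch; otherwise you must actually carry out the rotation argument, including locating $W$ and ruling out a second edge outside it, before the proof can be considered complete.
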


The remaining part of the paper is organized as follows. In the next section
we give the proofs of Theorems \ref{AS+} and \ref{mt}. In the concluding
remarks we round up the general discussion and state a conjecture about
further enhancement of Theorem \ref{mt}.

\section{Proofs}

For graph notation and concepts undefined here, we refer the reader to
\cite{Bol98}. For introductory material on the signless Laplacian see the
survey of Cvetkovi\'{c} \cite{C10} and its references. In particular, let $G$
be a graph, and $X$ be a set of vertices of $G.$ We write:

- $V\left(  G\right)  $ for the set of vertices of $G,$ and $e\left(
G\right)  ,\nu\left(  G\right)  $ for the number of its edges and its
vertices, respectively;

- $G\left[  X\right]  $ for the graph induced by $X,$ and $E\left(  X\right)
$ for $E\left(  G\left[  X\right]  \right)  ;$

- $\Gamma\left(  u\right)  $ for the set of neighbors of a vertex $u,$ and
$d\left(  u\right)  $ for $\left\vert \Gamma\left(  u\right)  \right\vert
.$\bigskip

\subsection{Proof of Theorem \ref{AS+}}

\begin{proof}
Assume for a contradiction that $P_{2k+3}\nsubseteq G.$ Let us first suppose
that $G$ is $2$-connected and let $C=\left(  v_{1},\ldots,v_{l}\right)  $ be a
longest cycle in $G.$ Set $V^{\prime}:=V\left(  G\right)  \backslash V\left(
C\right)  .$ A theorem of Dirac \cite{Dir52} implies that $l\geq2k,$ and
$P_{2k+3}\nsubseteq G$ implies that $l\leq2k+1$. \ As $C$ is maximal, no
vertex in $V^{\prime}$ can be joined to consecutive vertices in $C.$

Suppose first that $l=2k.$ We shall show that the set $V^{\prime}$ is
independent. Assume the opposite: let $uv$ be an edge in $V^{\prime},$ let
$C\left(  u\right)  =\Gamma(u)\cap V\left(  C\right)  $ and $C\left(
v\right)  =\Gamma(u)\cap V\left(  C\right)  .$ Since $G$ is connected,
$P_{2k+3}\nsubseteq G$ implies that $\left\vert C(u)\right\vert \geq k-1$ and
$\left\vert C(v)\right\vert \geq k-1.$

If there is a vertex $w\in C(v)\backslash C\left(  u\right)  ,$ then the
distance along $C$ between $w$ and any vertex in $C\left(  u\right)  $ is at
least $3.$ Hence $C\left(  u\right)  $ is contained in a segment of $2k-5$
consecutive vertices of $C$ and so $C\left(  u\right)  $ itself contains
consecutive vertices of $C,$ a contradiction$;$ hence $C\left(  v\right)
\subset C\left(  u\right)  $ and by symmetry we conclude that that $C\left(
u\right)  =C\left(  v\right)  .$

Finally, if $k\geq4,$ then $C(v)$ contains two vertices at distance $2$ along
$C,$\ and so $C$ can be extended, a contradiction. The remaining simple cases
$k=2$ and $3$ are left to the reader. Therefore $V^{\prime}$ is independent.

Clearly, every vertex $u\in V^{\prime}$ has exactly $k$ neighbors in $C$ and
therefore, either $\Gamma(u)=\left\{  v_{1},v_{3},\ldots,v_{2k-1}\right\}  $
or $\Gamma(u)=\left\{  v_{2},v_{4},\ldots,v_{2k}\right\}  .$ Let $u,w\in
V^{\prime},$ and assume that $\Gamma(u)=\left\{  v_{2},v_{4},\ldots
,v_{2k}\right\}  .$ If $\Gamma(w)=\left\{  v_{1},v_{3},\ldots,v_{2k-1}%
\right\}  ,$ then $C$ can be extended; hence $\Gamma(v)=\left\{  v_{2}%
,v_{4},\ldots,v_{2k}\right\}  $ for every $v\in$ $V^{\prime}.$

To complete the case $l=2k$ we shall show that $\left\{  v_{1},v_{3}%
,\ldots,v_{2k-1}\right\}  $ is independent. Assume the opposite: let $\left\{
x,y\right\}  \subset\left\{  v_{1},v_{3},\ldots,v_{2k-1}\right\}  $ and
$\left\{  x,y\right\}  \in E\left(  G\right)  $. By symmetry we can assume
that $x=v_{1}$ and $y=v_{2s+1}.$ Taking $u\in V^{\prime},$ we see that the
sequence
\[
u,v_{2},v_{3,}...,v_{2s+1},v_{1},v_{2k},v_{2k-1,}\ldots,v_{2s+2},u
\]
is a cycle longer than $C,$ a contradiction. Hence the set $\left\{
v_{1},v_{3},\ldots,v_{2k-1}\right\}  \cup V^{\prime}$ is independent and so
$G\subset S_{n,k}\subset S_{n,k}^{+}.$

Suppose now that $l=2k+1.$ Clearly $P_{2k+3}\nsubseteq G$ implies that
$V^{\prime}$ is independent. If $u,v\in V^{\prime}$ and $w\in\Gamma\left(
v\right)  \backslash\Gamma\left(  u\right)  ,$ the two neighbors of $w$ along
$C$ do not belong to $\Gamma\left(  u\right)  $ because $P_{2k+3}\nsubseteq
G.$ Hence $\Gamma\left(  u\right)  $ is a subset of $2k-2$ consecutive
vertices of $C$ and so $u$ is joined to two consecutive vertices of $C,$ a
contradiction. Hence, all vertices of $V^{\prime}$ are joined to the same set
of size $k;$ by symmetry let this set be $\left\{  v_{2},v_{4},\ldots
,v_{2k}\right\}  .$

We shall show that the set $\left\{  v_{1},v_{3},\ldots,v_{2k-1}\right\}  $ is
independent. Indeed, assume that $\left\{  v_{2s+1},v_{2t+1}\right\}  \in
E\left(  G\right)  $ and $1\leq2s+1<2t+1\leq2k-1.$ Taking $u,w\in V^{\prime},$
we see that the sequence
\[
u,v_{2s+2},v_{2s+3},\ldots,v_{2t+1},v_{2s+1},v_{2s-1},\ldots,v_{2t+2},w
\]
is a path of order $2k+3,$ contrary to our assumption$.$ Hence letting
\[
V_{2}:=\left\{  v_{1},v_{3},\ldots,v_{2k-1},v_{2k+1}\right\}  \cup V^{\prime
}\text{ \ \ and \ \ }V_{1}=V\left(  G\right)  \backslash V_{2},
\]
we find that $G\subset S_{n,k}^{+}.$ This complete the proof for $2$-connected graphs.

Finally suppose that $G$ is not $2$-connected. Let $B$ be an end-block of $G$
and $u$ be its cut vertex. Clearly, $v\left(  B\right)  \geq k+1;$ Theorem
\ref{tEG} implies that $B$ contains a path of order $\min\left\{  v\left(
B\right)  ,2k\right\}  $ with end vertex $u.$ Since there are at least two
end-blocks and $P_{2k+3}\nsubseteq G,$ there is no end-block $B$ with
$v\left(  B\right)  >k+2$ and there is at most one end-block of order $k+2.$
It is obvious that $G$ contains at most two cut vertices, otherwise we have
$P_{2k+3}\subset G.$ If $G$ contains one cut vertex, then each block of $G$ is
an end-block, and then \emph{(ii) }or\emph{ (iii)} holds. If $G$ contains two
cut vertices, then \emph{(iv) }holds, completing the proof.
\end{proof}

\subsection{Some auxiliary results}

Before going further, note that%
\[
q\left(  S_{n,k}^{+}\right)  >q\left(  S_{n,k}\right)  =\frac{n+2k-2+\sqrt
{\left(  n+2k-2\right)  ^{2}-8\left(  k^{2}-k\right)  }}{2}.
\]
For $n\geq7k^{2}$ and $k\geq2$ we also find that
\begin{align}
q\left(  S_{n,k}^{+}\right)   &  >q\left(  S_{n,k}\right)  >n+2k-2-\frac
{2\left(  k^{2}-k\right)  }{n+2k-3}\label{finb}\\
&  >n+2k-3.\label{corb}%
\end{align}

If $q\left(  G\right)  \geq q\left(  S_{n,k}\right)  $ and $k\geq2$ the
inequality of Das \cite{Das04}, implies that
\[
\frac{2e(G)}{n-1}+n-2\geq q\left(  G\right)  \geq q\left(  S_{n,k}\right)
>n+2k-2-\frac{2\left(  k^{2}-k\right)  }{n+2k-3},
\]
and so,
\begin{equation}
e(G)>k\left(  n-k\right)  . \label{Dase}%
\end{equation}

We shall also use the following bound on $q\left(  G\right)  ,$ which can be
traced back to Merris \cite{Mer98},
\begin{equation}
q\left(  G\right)  \leq\max_{u\in V\left(  G\right)  }\left\{  d\left(
u\right)  +\frac{1}{d\left(  u\right)  }\sum_{v\in\Gamma\left(  u\right)
}d\left(  v\right)  \right\}  . \label{Min}%
\end{equation}

We first determine a crucial property used throughout the proof of Theorem
\ref{mt}.

\begin{proposition}
\label{dom}Let $k\geq1,$ $n>7k^{2},$ and $G\in\mathcal{G}\left(  n\right)  .$

\emph{(i)} If $q\left(  G\right)  \geq q\left(  S_{n,k}\right)  $ and
$P_{2k+2}\nsubseteq G,$ then $\Delta\left(  G\right)  =n-1;$

\emph{(ii)} If $q\left(  G\right)  \geq q\left(  S_{n,k}^{+}\right)  $ and
$P_{2k+3}\nsubseteq G,$ then $\Delta\left(  G\right)  =n-1$.
\end{proposition}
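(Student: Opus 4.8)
The plan is to argue by contradiction: assume $\Delta(G)\le n-2$ while $q(G)$ is large, and derive that $G$ must contain the forbidden path after all. The starting point is the lower bound on $q(G)$ together with the Das inequality \eqref{Dase}, which already gives $e(G)>k(n-k)$; this is the edge budget we must exploit. The key tension is between having many edges and having bounded maximum degree: if no vertex is dominating, then many vertices must carry a substantial share of the degree sum, and this ``spread out'' degree sequence should be enough to build a long path.

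First I would use the Merris-type bound \eqref{Min}: there is a vertex $u$ with
\[
d(u)+\frac{1}{d(u)}\sum_{v\in\Gamma(u)}d(v)\ \ge\ q(G)\ >\ n+2k-3.
\]
Writing $d:=d(u)$, since $d\le n-2$ the second term must be large, so the average degree of the neighbours of $u$ is at least roughly $(n+2k-3-d)\cdot d/d$... more precisely $\sum_{v\in\Gamma(u)}d(v) > d(n+2k-3-d)$. When $d$ is small (close to $k$) this forces the neighbours of $u$ to have near-maximum degree; when $d$ is large it still forces a large sum. In either regime one extracts a set of vertices of high degree clustered around $u$.

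Next I would turn the high-degree structure into a path. The natural tool is Theorem \ref{tEG} (the Erd\H{o}s--Gallai corollary): in a $2$-connected graph where all but one vertex have degree $\ge k+1$ (or $\ge k$), one gets a path of order $\min\{\nu(G),2k\}$ or so with a prescribed endpoint; one must be slightly more careful to reach $2k+2$ or $2k+3$. Concretely, I expect the argument to locate, inside $\Gamma(u)\cup\{u\}$ or a slightly larger neighbourhood, a highly connected piece with minimum degree well above $k$, apply Theorem \ref{tEG} there to get a long path avoiding a chosen vertex, then prepend/append a couple of edges through $u$ and a vertex of large degree outside the path to push the order up to $2k+2$ (resp.\ $2k+3$). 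The condition $n\ge 7k^2$ is what guarantees there is enough ``room'' (enough vertices of large degree, enough edges) for this extension to succeed; the quadratic bound in $n$ is precisely so that, after removing $O(k)$ vertices used in the path and accounting for the edge deficit, a dominating-looking structure still persists.

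The main obstacle I anticipate is the bookkeeping when $d(u)$ is of intermediate size: neither ``$d(u)$ small, so neighbours are nearly dominating'' nor ``$d(u)$ large, so $u$ is nearly dominating'' applies cleanly, and one has to balance the two terms in \eqref{Min} and show that in all cases a subgraph with minimum degree $>k$ and enough vertices exists to which Theorem \ref{tEG} can be applied to yield $P_{2k+2}$ or $P_{2k+3}$. A secondary subtlety is ensuring $2$-connectivity (or handling the cut-vertex case separately, as in the proof of Theorem \ref{AS+}) before invoking Theorem \ref{tEG}; if $G$ is not $2$-connected one passes to an end-block, checks it still has enough high-degree vertices given $n\ge 7k^2$, and derives the path there plus an extension through the cut vertex. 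Part (ii) will run in parallel to part (i), using the stronger hypothesis $q(G)\ge q(S_{n,k}^+)>q(S_{n,k})$ and the bound \eqref{corb}, gaining essentially one extra edge/degree unit that upgrades $P_{2k+2}$ to $P_{2k+3}$.
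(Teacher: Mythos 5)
Your opening moves (assume $\Delta(G)\le n-2$, invoke the Merris bound (\ref{Min}) and the edge bound (\ref{Dase})) match the paper's, but the core of your plan has a genuine gap. You propose to extract from a neighbourhood of $u$ a $2$-connected piece of minimum degree ``well above $k$'' and feed it to Theorem \ref{tEG} to build $P_{2k+2}$. The edge budget is not there: $q(G)\ge q(S_{n,k})$ only yields $e(G)>k(n-k)$, which is \emph{below} the threshold $kn-\binom{k+1}{2}=e(S_{n,k})$ needed to force a subgraph of minimum degree $k+1$; indeed $S_{n,k}$ itself has no such subgraph. What one can extract is a subgraph of minimum degree $k$ (this is exactly Lemma \ref{lind}), but by Theorem \ref{AS} such a subgraph may simply be contained in some $S_{m,k}$ or be an $L_{t,k}$, and then no $P_{2k+2}$ appears, so no contradiction is reached along your route. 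The hypothesis $\Delta(G)\le n-2$ must enter in an essentially quantitative way, and your sketch never pins down where.

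The paper's actual argument is purely numerical and never constructs a path. First it shows $G$ is connected (a component realizing $q(G)$ would, by the edge bound of Theorem \ref{BES} for path-free connected graphs combined with Das's inequality, have $q$ strictly below $q(S_{n,k})$). Then, for the vertex $u$ attaining the maximum in (\ref{Min}), it uses the \emph{absence} of the long path to bound the degree sum from above, namely $\sum_{v\in\Gamma(u)}d(v)\le 2e(S_{n,k}^{+})-n+1\le(2k-1)n-k^{2}-k+3$ --- this is the step your sketch is missing, since you only exploit the degree sum as a lower bound. It disposes of $d(u)<2k$ via $q(G)\le d(u)+\Delta(G)\le n+2k-3$, and for $2k\le d(u)\le n-2$ observes that $x\mapsto x+\left(  (2k-1)n-k^{2}-k+3\right)  /x$ is convex, so it suffices to check the endpoints $x=2k$ and $x=n-2$; both give $q(G)<q(S_{n,k})$, contradicting (\ref{finb}). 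If you want to salvage your approach you would have to explain how bounded maximum degree, together with the spectral hypothesis, upgrades the minimum-degree-$k$ subgraph past the $S_{m,k}$ and $L_{t,k}$ obstructions --- which is precisely the work the counting argument does for free.
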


\begin{proof}
We shall prove only \emph{(ii)}, as \emph{(i)} follows similarly. We claim
that $G$ is connected. Assume the opposite and let $G_{0}$ be a component of
$G,$ say of order $n_{0}\leq n-1,$ such that $q\left(  G_{0}\right)  =q\left(
G\right)  .$ Since $2n_{0}-2\geq q\left(  G_{0}\right)  =q\left(  G\right)
>n,$ we see that $n_{0}>\left(  5k+4\right)  /2$ and Lemma \ref{BES} implies
that $2e\left(  G_{0}\right)  \leq2kn_{0}-k^{2}-k+2;$ hence, by the inequality
of Das \cite{Das04},%
\begin{align*}
q\left(  G\right)   &  =q\left(  G_{0}\right)  \leq\frac{2e(G_{0})}{n_{0}%
-1}+n_{0}-2\leq\frac{2kn-k^{2}-3k+2}{n-2}+n-3\\
&  =n+2k-3-\frac{k^{2}-k-2}{n-2}\\
&  <n+2k-2-\frac{2\left(  k^{2}-k\right)  }{n+2k-3}\\
&  \leq q\left(  S_{n,k}\right)  .
\end{align*}
This contradiction implies that $G$ is connected.

Now, we shall prove that $\Delta\left(  G\right)  =n-1.$ Assume for a
contradiction that $\Delta\left(  G\right)  \leq n-2.$ Let $u$ be a vertex for
which the maximum in the right side of (\ref{Min}) is attained. Note that
$d\left(  u\right)  \geq2k,$ for otherwise%
\[
q\left(  G\right)  \leq d\left(  u\right)  +\frac{1}{d\left(  u\right)  }%
\sum_{v\in\Gamma\left(  u\right)  }d\left(  v\right)  \leq d\left(  u\right)
+\Delta\left(  G\right)  \leq n+2k-3<q\left(  S_{n,k}^{+}\right)  ;
\]
Furthermore, since $G$ is connected, in view of Lemma \ref{BES},
\[
\sum_{v\in\Gamma\left(  u\right)  }d\left(  v\right)  \leq2e\left(  G\right)
-\sum_{v\in V\left(  G\right)  \backslash\Gamma\left(  u\right)  }d\left(
v\right)  \leq2e\left(  S_{n,k}^{+}\right)  -n+1\leq\left(  2k-1\right)
n-k^{2}-k+3,
\]
and so
\[
q\left(  G\right)  \leq d\left(  u\right)  +\frac{\left(  2k-1\right)
n-k^{2}-k+3}{d\left(  u\right)  }.
\]
The function $f\left(  x\right)  :=x+\left(  \left(  2k-1\right)
n-k^{2}-k+3\right)  /x$ is convex in $x$ for $x>0$; hence its maximum is
attained either for $x=2k$ or for $x=n-2.$ But we see that
\[
f\left(  2k\right)  =n+2k-\frac{n+\left(  k^{2}+k\right)  -3}{2k}%
<n+2k-2-\frac{2\left(  k^{2}-k\right)  }{n+2k-3}\leq q\left(  S_{n,k}\right)
,
\]
and so,
\[
q\left(  G\right)  \leq f\left(  n-2\right)  =n+2k-3-\frac{k^{2}-3k-1}%
{n-2}<n+2k-2-\frac{2\left(  k^{2}-k\right)  }{n+2k-3}\leq q\left(
S_{n,k}\right)  .
\]
This inequality contradicts the bound (\ref{finb}), completing the proof.
\end{proof}

\begin{lemma}
\label{lind}Let $k\geq2,$ $n\geq7k^{2},$ $G\in\mathcal{G}\left(  n\right)  ,$
$e\left(  G\right)  >k\left(  n-k\right)  ,$ and $\delta\left(  G\right)  \leq
k-1.$ Suppose also that $G$ has a vertex $u$ with $d\left(  u\right)  =n-1.$
If $P_{2k+3}\nsubseteq G,$ there exists an induced subgraph $H\subset G,$ with
$\nu\left(  H\right)  \geq n-k^{2},$ $\delta\left(  H\right)  \geq k,$ and
$u\in V\left(  H\right)  .$
\end{lemma}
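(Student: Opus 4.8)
The plan is to peel off low-degree vertices one at a time, starting from $G$, and show the process stops early. Set $H_0:=G$ and, as long as the current graph $H_i$ has a vertex of degree (in $H_i$) at most $k-1$ other than $u$, delete one such vertex to obtain $H_{i+1}$. Since $u$ is adjacent to everything in $G$, deleting vertices never creates a problem at $u$ itself; we only ever delete vertices of low degree. Let $H$ be the final graph: by construction $\delta(H)\ge k$ (the vertex $u$ has degree $\nu(H)-1\ge k$ once $\nu(H)\ge k+1$, which we will confirm holds), $H$ is an induced subgraph of $G$ containing $u$, and $P_{2k+3}\nsubseteq H$ since $H\subseteq G$. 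It remains to bound the number $t$ of deleted vertices by $k^2$, i.e. $\nu(H)=n-t\ge n-k^2$.

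The key is an edge-counting argument. Each deletion removes at most $k-1$ edges, so $e(H)\ge e(G)-(k-1)t> k(n-k)-(k-1)t$. On the other hand, $H$ is connected (it still contains the dominating vertex $u$ and hence is connected), has $\nu(H)=n-t$ vertices, contains no $P_{2k+3}$, and — provided $n-t$ is still larger than $(5k+4)/2$ — Theorem~\ref{BES} applied to $H$ (with the forbidden path $P_{2k+3}$, so the relevant extremal graph is $S^{+}_{n-t,k}$) gives $e(H)\le e(S^{+}_{n-t,k})=k(n-t-k)+\binom{k}{2}+1\le k(n-t)-k^2$. Actually a cleaner bookkeeping: I would simply use $e(H)\le kn - k^2/2 + O(k)$ type bound from Erd\H os--Gallai / Theorem~\ref{BES} and combine. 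Chaining the two inequalities yields
\[
k(n-k)-(k-1)t< e(H)\le k(n-t)-k^2+\tfrac{k^2-k}{2}+1,
\]
and rearranging gives a bound of the form $t< k^2$ (the linear-in-$t$ terms are $-(k-1)t$ on the left versus $-kt$ on the right, so the net coefficient of $t$ on the right-hand side after moving things over is $+1$, forcing $t$ to be at most a constant times $k^2$ but in fact below $k^2$). One must be slightly careful that $n-t$ stays above the threshold $(5k+4)/2$ where Theorem~\ref{BES} applies; but since $n\ge 7k^2$ and we are trying to prove $t<k^2$, the bound $n-t>6k^2>(5k+4)/2$ holds throughout the relevant range, so the argument is self-consistent (one can phrase this as: let $t^{\*}$ be the first time $\nu$ drops to $n-k^2$, and derive a contradiction before that time if the process has not already stopped).

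The main obstacle is the self-referential nature of the estimate: Theorem~\ref{BES} can only be applied to $H_i$ once we know $\nu(H_i)$ is above its threshold, yet the whole point is to bound how far $\nu$ drops. I would handle this by running the argument "up to the first bad moment": suppose for contradiction the process deletes at least $k^2$ vertices, and look at the graph $H'$ obtained after exactly $k^2$ deletions. Then $\nu(H')=n-k^2\ge 6k^2>(5k+4)/2$, $H'$ is connected via $u$, $P_{2k+3}\nsubseteq H'$, so Theorem~\ref{BES}(ii) bounds $e(H')$; but the edge-deletion accounting forces $e(H')>k(n-k)-(k-1)k^2$, and a short computation shows these two bounds on $e(H')$ are incompatible for $n\ge 7k^2$. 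Hence fewer than $k^2$ vertices are deleted, $\nu(H)\ge n-k^2$, and $\delta(H)\ge k$ as required. The remaining routine check is just that $\nu(H)\ge k+1$ so that $d_H(u)=\nu(H)-1\ge k$, which is immediate from $\nu(H)\ge n-k^2\ge 6k^2\ge k+1$.
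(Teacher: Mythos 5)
Your proposal is correct and follows essentially the same route as the paper: iteratively delete minimum-degree vertices, note each deletion removes at most $k-1$ edges, and play the resulting lower bound on $e(H)$ against the Theorem~\ref{BES} upper bound for connected $P_{2k+3}$-free graphs to force fewer than $k^2$ deletions. Your extra care about keeping the order above the threshold $(5k+4)/2$ and about $u$ never being deleted only makes explicit details the paper leaves implicit.
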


\begin{proof}
Define a sequence of graphs, $G_{0}\supset G_{1}\supset\cdots\supset G_{r}$
using the following procedure$.$

$G_{0}:=G;$

$i:=0$;

\textbf{while} $\delta(G_{i})<k$ \textbf{do begin}

\qquad select a vertex $v\in V(G_{i})$ with $d(v)=\delta(G_{i});$

\qquad$G_{i+1}:=G_{i}-v;$

$\qquad i:=i+1;$

\textbf{end.}

Note that the while loop must exit before $i=k^{2}$. Indeed, by $P_{2k+3}%
\nsubseteq G_{i}$ Lemma \ref{BES} implies that
\[
kn-ki-\left(  k^{2}+k\right)  /2+1\geq e(G_{i})\geq e\left(  G\right)
-i\left(  k-1\right)  >k\left(  n-k\right)  -i\left(  k-1\right)  ;
\]
hence $i<k^{2}.$ Letting $H=G_{r},$ where $r$ is the last value of the
variable $i,$ the proof is completed.
\end{proof}

\subsection{Proof of \textbf{Theorem }\ref{mt}}

\begin{proof}
[Proof of \emph{(i)}]Assume for a contradiction that $P_{2k+2}\nsubseteq G.$
By Proposition \ref{dom} $G$ has a vertex $u$ with $d\left(  u\right)  =n-1.$
If $k=1,$ then $P_{4}\nsubseteq G$\ and clearly $G=S_{n,1}.$

Let $k\geq2.$ If $\delta\left(  G\right)  \geq k,$ Lemma \ref{AS} implies that
$G\subset S_{n,k}$ or $n=kt+1$ and $G=L_{t,k}.$ The latter case cannot hold
because
\begin{equation}
q\left(  L_{t,k}\right)  \leq\max_{uv\in E\left(  L_{t,k}\right)  }\left\{
d(u)+d(u)\right\}  =n-1+k\leq n+2k-3<q\left(  S_{n,k}\right)  . \label{Lt,k}%
\end{equation}
In the first case, if $G\neq S_{n,k},$ then $q\left(  G\right)  <q\left(
S_{n,k}\right)  ,$ completing the proof. Suppose now that $\delta\left(
G\right)  \leq k-1$. By (\ref{Dase}) we have $e\left(  G\right)  >k\left(
n-k\right)  $ and then Lemma \ref{lind} implies that there exists an induced
subgraph $H$ of order $n_{1}\geq n-k^{2},$ with $\delta\left(  H\right)  \geq
k$ and $u\in V\left(  H\right)  .$ Let $H^{^{\prime}}=G\left[  V\left(
G\right)  \backslash V\left(  H\right)  \right]  $. Theorem \ref{AS} implies
that $H\subset S_{n_{1},k},$ or $n_{1}=tk+1$ and $H=L_{t,k}.$

Assume first that $n_{1}=tk+1$ and $H=L_{t,k}.$ Obviously $u$ is the center of
$H.$ Note that there is no edge between $V(H^{^{\prime}})$ and $V\left(
H\right)  \backslash\left\{  u\right\}  ,$ for otherwise $P_{2k+2}\subset G.$
Therefore,
\[
e(H^{^{\prime}})=e\left(  G\right)  -e\left(  H\right)  -\left(
n-n_{1}\right)  >k(n-k)-\frac{\left(  k+1\right)  \left(  n_{1}-1\right)  }%
{2}-\left(  n-n_{1}\right)  .
\]
After some algebra, we find that $e(H^{^{\prime}})>\frac{1}{2}\left(
k-1\right)  \left(  n-n_{1}\right)  ;$ hence $P_{k+1}\subset H^{^{\prime}}$
$($see \cite{ErGa59}$).$ Since $u$ is a dominating vertex and $P_{k+1}\subset
H,$ we see that $P_{2k+2}\subset G,$ a contradiction.

Assume now that $H\subset S_{n_{1},k}.$ Write $I$ for the independent set of
size $n_{1}-k$ of $H.$ As $\delta\left(  H\right)  \geq k,$ $H$ contains a
path $P_{2k+1}$ with both ends in $I.$ Thus, the set $V\left(  H^{\prime
}\right)  \cup I$ is independent, for otherwise $P_{2k+2}\subset G.$ Hence,
$G\subset S_{n,k}$ and so $G=S_{n,k},$ completing the proof of \emph{(i)}.

\begin{proof}
[Proof of \emph{(ii)}]Assume for a contradiction that $P_{2k+3}\nsubseteq G.$
By Proposition \ref{dom} $G$ has a vertex $u$ with $d\left(  u\right)  =n-1.$
Let $k=1.$ There is an edge in $G-u,$ for otherwise $q\left(  G\right)
<q(S_{n,1}^{+}).$ If there exist two edges in $G-u,$ then $P_{5}\subset G.$ So
$G-u$ induces exactly one edge, and $G=S_{n,1}^{+}.$
\end{proof}

Let $k\geq2.$ If $\delta\left(  G\right)  \geq k,$ in view of $\Delta\left(
G\right)  =n-1$, Theorem \ref{AS+} implies that either $G\subset S_{n,k}^{+}$
or $n=tk+1$ and $G=L_{t,k},$ or $G\subset K_{1}\vee(tK_{k}\cup K_{k+1}).$ The
inequality (\ref{Lt,k}) shows that $G\neq L_{t,k},$ and $G\subset K_{1}%
\vee(tK_{k}\cup K_{k+1})$ cannot hold because%
\begin{align*}
q\left(  K_{1}\vee(tK_{k}\cup K_{k+1})\right)   &  \leq\max_{u\in V\left(
K_{1}\vee(tK_{k}\cup K_{k+1}\right)  }\left\{  d\left(  u\right)  +\frac
{1}{d\left(  u\right)  }\sum_{v\in\Gamma\left(  u\right)  }d\left(  v\right)
\right\}  \\
&  =n+k-1-\frac{k+1}{n-1}\\
&  \leq n+2k-2-\frac{2\left(  k^{2}-k\right)  }{n+2k-3}<q\left(  S_{n,k}%
^{+}\right)  .
\end{align*}
In the first case, if $G\neq S_{n,k},$ then $q\left(  G\right)  <q\left(
S_{n,k}^{+}\right)  ,$ completing the proof. Suppose therefore that
$\delta\left(  G\right)  \leq k-1$. By (\ref{Dase}) we have $e\left(
G\right)  >k\left(  n-k\right)  $ and Lemma \ref{lind} implies that there
exists an induced subgraph $H$ of order $n_{1}\geq n-k^{2},$ with
$\delta\left(  H\right)  \geq k$ and $u\in V\left(  H\right)  .$ Theorem
\ref{AS+} implies that $H$ satisfies one of the conditions \emph{(i)-(iv).
}Since $u$ is a dominating vertex in $H$, condition \emph{(iv) }is impossible.

Next, assume that $H$ satisfies \emph{(ii) }or \emph{(iii)}$.$ Clearly,
$n_{1}\geq n-k^{2}\geq3k+2.$ Let $t$ be the number of components of $H-u;$
clearly $t\geq3$. Suppose there are two components $H_{1}$ and $H_{2}$ of
$H-u,$ with edges between $H_{1}$ and $H^{\prime}$ and between $H_{2}$ and
$H^{\prime}$. Then either $P_{2k+3}\subset G,$ or there is a cycle $C_{2k+2}$
containing $u;$ hence $P_{2k+3}\subset G$ anyway. Thus, $H-u$ has $t-1$
components that are also components of $G-u.$ Let $H_{0}$ be the remaining
component of $H-u;$ set $m=v\left(  H_{0}\right)  $ and note that $k\leq m\leq
k+1.$ Write $H^{\prime\prime}$ for the graph obtained by adding $H_{0}$ to
$H^{\prime}.$ We shall show that $e(H^{\prime\prime})>\left(  k/2\right)
v\left(  H^{\prime\prime}\right)  .$ Indeed, otherwise we have
\begin{align*}
\left(  k/2\right)  \left(  n-n_{1}+m\right)   &  \geq e(H^{\prime\prime
})=e\left(  G\right)  -e\left(  H\right)  +e\left(  H_{0}\right)  -(n-n_{1})\\
&  >k\left(  n-k\right)  -e\left(  H\right)  +e\left(  H_{0}\right)
-(n-n_{1}).
\end{align*}
Now, using the obvious inequalities
\[
e\left(  H\right)  \leq n_{1}-1+\frac{\left(  k-1\right)  \left(
n_{1}-k-1\right)  }{2}+\frac{\left(  k+1\right)  k}{2}\text{ \ \ \ \ \ and
\ \ \ \ \ }e\left(  H_{0}\right)  \geq\left(  k-1\right)  m/2,
\]
together with $m\geq k,$ $n_{1}\geq n-k^{2}$ and $n\geq7k^{2},$ we obtain a
contradiction. Hence, $e(H^{\prime\prime})>\left(  k/2\right)  v\left(
H^{\prime\prime}\right)  $ and so $P_{k+2}\subset H^{\prime\prime};$ since $u$
is a dominating vertex and $P_{k+1}\subset H,$ we get $P_{2k+3}\subset G,$
which is a contradiction.

Finally, assume that $H\subset S_{n_{1},k}^{+},$ that is to say, there exists
$I\subset V\left(  H\right)  $ of size $n_{1}-k,$ such that $I$ induces at
most one edge on $H.$ If $I$ induces precisely one edge and there are edges
between $V\left(  H^{\prime}\right)  $ and $I,$ we see that $P_{2k+3}\subset
G,$ so $V\left(  H^{\prime}\right)  \cup I$ induces at most one edge. Hence,
$G\subset S_{n,k}^{+}$ and $G=S_{n,k}^{+},$ completing the proof.

Assume now that $I$ is independent and set $J=V\left(  H\right)  \backslash
I.$ Clearly, $\delta\left(  H\right)  \geq k$ implies that every vertex of $I$
is joined to every vertex in $J;$ hence, any vertex in $I$ can be joined in
$H$ to the vertex $u$ by a path of order $2k+1.$ This implies that $V\left(
H^{\prime}\right)  \cup I$ contains no paths of order $3,$ otherwise
$P_{2k+3}\subset G;$ hence $V\left(  H^{\prime}\right)  \cup I$ induces only
isolated edges and vertices.

If $V\left(  H^{\prime}\right)  \cup I$ induces exactly one edge, we certainly
have $G\subset S_{n,k}^{+}.$ Assume now that $V\left(  H^{\prime}\right)  \cup
I$ induces two or more edges. None of these edges has a vertex in $I,$ as
otherwise, using that $u$ is dominating vertex, we can construct a $P_{2k+3}$
in $G.$ Likewise, we see that each of the ends of any edge in $H^{\prime}$ is
joined only to $u.$ We shall show that $q\left(  G\right)  <q\left(
S_{n,k}\right)  .$

Let $\left(  x_{1},\ldots,x_{n}\right)  $ be a positive unit eigenvector to
$q\left(  G\right)  .$ It is known, see, e.g., \cite{C10} that $q\left(
G\right)  =\sum_{ij\in E\left(  G\right)  }\left(  x_{i}+x_{j}\right)  ^{2}.$
Choose a vertex $v\in J\backslash\left\{  u\right\}  $ and let $ij$ be an edge
in $H^{\prime}.$ Letting $q=q\left(  G\right)  ,$ from the eigenequations for
$Q\left(  G\right)  $ we have
\[
\left(  q-2\right)  x_{i}=x_{j}+x_{u}\text{ \ \ and \ \ \ }\left(  q-2\right)
x_{j}=x_{i}+x_{u},
\]
implying that $x_{i}=x_{j}=x_{u}/\left(  q-3\right)  .$ On the other hand,
\[
\left(  q-d\left(  v\right)  \right)  x_{v}=\sum_{s\in\Gamma\left(  v\right)
}x_{s}>x_{u},
\]
implying that $x_{v}>x_{i}$ as $d\left(  v\right)  \geq\left\vert I\right\vert
\geq n-k^{2}-k>3.$

For every $ij\in E\left(  H^{\prime}\right)  ,$ remove the edge $ij$ and join
$v$ to $i$ and $j.$ Write $G^{\prime}$ for the resulting graph. Obviously
$G^{\prime}\subset S_{n,k}.$ We see that
\[
q\left(  S_{n,k}\right)  \geq q\left(  G^{\prime}\right)  \geq\sum_{ij\in
E\left(  G^{\prime}\right)  }\left(  x_{i}+x_{j}\right)  ^{2}>\sum_{ij\in
E\left(  G\right)  }\left(  x_{i}+x_{j}\right)  ^{2}=q\left(  G\right)  ,
\]
a contradiction showing that $V\left(  H^{\prime}\right)  \cup I$ induces at
most one edge and so $G\subset S_{n,k}^{+},$ completing the proof.
\end{proof}

\section{Concluding remarks}

In this paper we improve Theorem \ref{tYu} of Yu, by showing that if $G$ is a
graph of sufficiently large order $n$, then the condition $q\left(  G\right)
>q\left(  S_{n,k}\right)  $ implies that $P_{2k+2}\subset G.$ It is very
likely our own Theorem \ref{mt} can be improved in a similar way as stated in
the following conjecture for cycles.

\begin{conjecture}
Let $k\geq2$ and let $G$ be a graph of sufficiently large order $n.$ 

(i) if $q\left(  G\right)  \geq q\left(  S_{n,k}\right)  ,$ then
$C_{2k+1}\subset G,$ unless $G=S_{n,k};$

(i) if $q\left(  G\right)  \geq q\left(  S_{n,k}^{+}\right)  ,$ then
$C_{2k+2}\subset G,$ unless $G=S_{n,k}^{+}.$
\end{conjecture}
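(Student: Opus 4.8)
The plan is to mirror the proof of Theorem~\ref{mt}, replacing ``long path'' by ``long cycle'' and using that a dominating vertex turns a cycle problem into a path problem. Fix $k\ge2$, let $n$ be large, suppose $q(G)\ge q(S_{n,k})$ (resp.\ $q(G)\ge q(S_{n,k}^{+})$), and assume for a contradiction that $C_{2k+1}\nsubseteq G$ (resp.\ $C_{2k+2}\nsubseteq G$). First I would show $G$ is connected by induction on $n$: a component $G_{0}$ of largest $Q$-index has $q(G_{0})=q(G)\ge q(S_{n,k})$, so $\nu(G_{0})\ge q(S_{n,k})/2+1$ is again large, and the inductive hypothesis forces $G_{0}=S_{\nu(G_{0}),k}$ -- impossible, since $q(S_{m,k})$ strictly increases with $m$ -- or $C_{2k+1}\subset G_{0}\subset G$, a contradiction. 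The decisive step is then to establish $\Delta(G)=n-1$, the analogue of Proposition~\ref{dom}. Once a dominating vertex $u$ is at hand the problem becomes a path problem: for any vertex $v$, $P_{2k}\subset G[\Gamma(v)]$ implies $C_{2k+1}\subset G$ (close the path through $v$), and when $u$ is dominating the converse holds as well, so $C_{2k+1}\subset G$ iff $P_{2k}\subset G-u$, and likewise $C_{2k+2}\subset G$ iff $P_{2k+1}\subset G-u$; since $P_{2k}=P_{2(k-1)+2}$ and $P_{2k+1}=P_{2(k-1)+3}$, this puts us in the setting of Theorems~\ref{AS} and \ref{AS+} with parameter $k-1$, applied to $G-u$.

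Granting $\Delta(G)=n-1$, the rest should follow the template of the proof of Theorem~\ref{mt}. Das's inequality and (\ref{finb}) give $e(G)>k(n-k)$ as in (\ref{Dase}); since each $G_{i}-u$ is $P_{2k}$-free (so $e(G_{i})\le k\,\nu(G_{i})-k$ by Erd\H{o}s--Gallai), the peeling of Lemma~\ref{lind} applied to $G$, never deleting $u$, terminates and produces a connected induced subgraph $H\ni u$ with $\delta(H)\ge k$ and $\nu(H)\ge n-k^{2}$. Applying Theorem~\ref{AS} (resp.\ Theorem~\ref{AS+}) with parameter $k-1$ to the components of $H-u$, one rules out the exceptional configurations ($L_{t,k-1}$, $K_{1}\vee((t-1)K_{k-1}\cup K_{k})$, the two-centre graph of clause (iv), and the cases where $H-u$ has two or more ``$S$-type'' components) by $Q$-index estimates of the type (\ref{Lt,k}); for some of these the crude bound (\ref{Min}) is not sharp enough, and one must instead use the positive-eigenvector comparison from the end of the proof of Theorem~\ref{mt} (relocate an edge onto a high-degree vertex and check that $q$ strictly increases). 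What remains is $H-u\subset S_{\nu(H)-1,k-1}$, i.e.\ $H\subset S_{\nu(H),k}$ (resp.\ $S^{+}$); reattaching $u$ and rerunning the ``independent set / isolated edge'' analysis of Theorem~\ref{mt} forces $G\subset S_{n,k}$ (resp.\ $S_{n,k}^{+}$), hence $G=S_{n,k}$ (resp.\ $S_{n,k}^{+}$). The few smallest cases -- in particular $k=2$ in part (ii), where Theorem~\ref{AS+} is not available -- are disposed of by hand, the relevant $P_{5}$-free structures being elementary.

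The genuinely hard part is the step $\Delta(G)=n-1$. For paths (Proposition~\ref{dom}) it rested on the Erd\H{o}s--Gallai edge bound for $P$-free graphs, and there is no counterpart here: a $C_{2k+1}$-free graph can be as dense as $K_{\lfloor n/2\rfloor,\lceil n/2\rceil}$, with $\sim n^{2}/4$ edges and no odd cycle whatsoever -- far denser than $S_{n,k}$. What rescues the theorem is that such near-bipartite graphs have $Q$-index only about $n$, so the dominating-vertex reduction must use $q(G)$ essentially and not merely the edge count. Concretely, let $u$ attain the maximum in (\ref{Min}); if $d(u)<2k$ then $q(G)\le d(u)+\Delta(G)\le n+2k-3<q(S_{n,k})$, so $d(u)\ge2k$; since $G[\Gamma(u)]$ is $P_{2k}$-free it has at most $(k-1)d(u)$ edges, and splitting $\sum_{v\in\Gamma(u)}d(v)$ into edges inside $\Gamma(u)$ (counted twice), the $d(u)$ edges to $u$, and the edges to the rest gives
\[
q(G)\le d(u)+\frac{1}{d(u)}\sum_{v\in\Gamma(u)}d(v)\le d(u)+2k-1+\frac{e\bigl(\Gamma(u),\,V(G)\setminus(\Gamma(u)\cup\{u\})\bigr)}{d(u)}\le n+2k-2.
\]
This falls just short of a contradiction, by the $\Theta(k^{2}/n)$ gap between $n+2k-2$ and $q(S_{n,k})$. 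Closing that gap is the crux: when $d(u)\le n-2$ and the displayed bound is nearly tight, one must show either that $\Gamma(u)$ together with the vertices outside $\Gamma(u)\cup\{u\}$ carries a $P_{2k}$ avoiding $u$ -- hence $C_{2k+1}\subset G$ -- by routing a path through the structured $P_{2k}$-free graph $G[\Gamma(u)]$ (using the Balister--Gy\H{o}ri--Lehel--Schelp and Ali--Staton descriptions of near-extremal $P_{2k}$-free graphs) and the outside vertices, or else that $G[\Gamma(u)]$ and the outside are so sparsely joined that $G\subset S_{n,k}$ at once. Turning this dichotomy into a clean local stability lemma for $C_{2k+1}$-free neighbourhoods is where the real work lies, and it is likely to require an argument not present in this paper; part (ii) is entirely parallel, with $C_{2k+2}$, $P_{2k+1}$, and $S_{n,k}^{+}$ in place of $C_{2k+1}$, $P_{2k}$, and $S_{n,k}$.
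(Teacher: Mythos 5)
This statement is posed in the paper as a \emph{conjecture}; the authors give no proof and explicitly remark that the missing ingredient is a stability theorem for $2$-connected graphs of large minimum degree with no long cycles, which ``seems to have not been investigated yet.'' So there is no argument in the paper against which your proposal can be checked, and your proposal itself is not a proof but a programme with an acknowledged hole. The reduction you set up is sound as far as it goes: once a dominating vertex $u$ is available, $C_{2k+1}\subset G$ is indeed equivalent to $P_{2k}\subset G-u$ (a $C_{2k+1}$ either avoids $u$ or yields a $P_{2k}$ in $G-u$ on deletion of $u$, and conversely any $P_{2k}$ in $G-u$ closes through $u$), the peeling of Lemma \ref{lind} adapts, and Theorems \ref{AS} and \ref{AS+} with parameter $k-1$ applied to $G-u$ would plausibly finish the job after the exceptional configurations are eliminated by eigenvector arguments.

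The genuine gap is exactly where you place it: establishing $\Delta(G)=n-1$. Proposition \ref{dom} leans entirely on the Erd\H{o}s--Gallai edge bound $e(G)\leq k\nu(G)/2$ for $P_{2k+2}$-free graphs, and no such bound exists for $C_{2k+1}$-free graphs -- $K_{\lfloor n/2\rfloor,\lceil n/2\rceil}$ is $C_{2k+1}$-free with $\sim n^{2}/4$ edges -- so neither the connectivity step nor the degree-sum step of that proof transfers. Your local computation via (\ref{Min}) only reaches $q(G)\leq n+2k-2$, which misses $q(S_{n,k})$ by $\Theta(k^{2}/n)$, and the ``dichotomy'' you invoke to close that gap (either route a $P_{2k}$ through $\Gamma(u)$ and the outside, or conclude $G\subset S_{n,k}$ directly) is precisely the unproved stability statement the authors identify as the open problem. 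Until that lemma is supplied, the proposal does not constitute a proof of the conjecture.
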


For the proof of this conjecture one may look for a stability theorem for
$2$-connected graphs with large minimum degree and with no long cycles,
similar Theorems \ref{AS} and \ref{AS+}.\ This topic is interesting by itself
and seem to have not been investigated yet.\bigskip

\textbf{Acknowledgement}

This work was done while the second author was visiting the University of Memphis.


\begin{thebibliography}{99}                                                                                               %


\bibitem {AlSt96}A.A. Ali and W. Staton, On extremal graphs with no long
paths, \emph{Electron. J. Combin.} \textbf{3} (1996), \#R20.

\bibitem {BGLS08}P.N. Balister, E. Gy\H{o}ri, J. Lehel, and R.H. Schelp,
Connected graphs without long paths, \emph{Discrete Math.} \textbf{308
}(2008), 4487--4494.

\bibitem {Bol98}B. Bollob\'{a}s, \emph{Modern Graph Theory}\textit{,} Graduate
Texts in Mathematics, 184, Springer-Verlag, New York (1998).

\bibitem {C10}D. Cvetkovi\'{c}, Spectral theory of graphs based on the
signless Laplacian, Research Report, (2010), available at:
$\emph{http://www.mi.sanu.ac.rs/projects/signless\_L\_reportApr11.pdf.}$

\bibitem {Das04}K. Das, Maximizing the sum of the squares of the degrees of a
graph, \emph{Discrete Math }\textbf{285} (2004), 57 -- 66.

\bibitem {Dir52}G.A. Dirac, Some Theorems on abstract graphs, \emph{Proc.
London Math Soc.} \textbf{2} (1952) 69-81.

\bibitem {ErGa59}P. Erd\H{o}s and T. Gallai, On maximal paths and circuits of
graphs, \emph{Acta Math. Acad. Sci. Hungar} \textbf{10} (1959), 337--356.

\bibitem {ErRe62}P. Erd\H{o}s and A. R\'{e}nyi, On a problem in the theory of
graphs, \emph{Publ. Math. Inst. Hungar. Acad. Sci.} \textbf{7A} (1962), 623--641.

\bibitem {Mer98}R. Merris, A note on Laplacian graph eigenvalues, \emph{Linear
Algebra Appl. }\textbf{295 }(1998), 33-35.

\bibitem {Nik10}V. Nikiforov, The spectral radius of graphs without paths and
cycles of specified length, \emph{Linear Algebra Appl. }\textbf{432} (2010), 2243-2256.

\bibitem {Yu08}G.Yu, On the maximal signless Laplacian spectral radius of
graphs with given matching number, \emph{Proc. Japan Acad.,} \textbf{84}, Ser.
A (2008).
\end{thebibliography}
\end{document}